\DeclareSymbolFont{AMSb}{U}{msb}{m}{n}
\newtheorem{theorem}{Theorem}[section]
\newtheorem*{theorem*}{Theorem}
\newtheorem{lemma}[theorem]{Lemma}
\theoremstyle{definition}
\theoremstyle{remark}
\DeclareMathOperator{\conv}{conv}
\DeclareMathOperator{\bd}{bd}
\newcommand{\abs}[1]{\lvert #1 \rvert}
\begin{document} 

\title{Meissner Polyhedra}
\author{Luis Montejano}
\address[L. Montejano]{Instituto de Matemáticas, Unidad Juriquilla, UNAM}
\email{luis@matem.unam.mx}
\author{Edgardo Roldán-Pensado}
\address[E. Roldán-Pensado]{Instituto de Matemáticas, Unidad Juriquilla, UNAM}
\email{e.roldan@im.unam.mx}

\subjclass[2010]{Primary 52A15; Secondary 53A05}
\keywords{Constant width; Reuleaux polyhedron; Meissner solid}

\begin{abstract}
In this paper we develop a concrete way to construct bodies of constant width in dimension three. They are constructed from special embeddings of self-dual graphs.
\end{abstract}

\maketitle

\section{Introduction and preliminaries}\label{sec:intro}

Constant width bodies and their properties have been known for centuries. L. Euler, in fact, studied them under the name ``orbiforms''. He was interested in constant width curves whose boundaries could be represented as the evolute of a hypocycloid. Nearly a hundred years later, in 1875, Franz Reuleaux \cite{R} published a book on kinematics, in which he mentioned constant width curves and gave some examples. He later gave the construction of what might be considered the simplest constant width curve which is not a circle, and which today bears his name. Although we know of many procedures to construct curves with constant width, the same is not true for their higher dimensional analogues.

By a theorem of Pál, we know that every subset of $R^n$ of diameter $1$ is contained in a body of constant width \cite{CG}.
Sallee \cite{S}, and Lachand and Outdet \cite{LO}, among others, gave non-constructive procedures to find them, but besides the two Meissner solids \cite{Me1}, and the obvious constant width bodies of revolution, there is no concrete example in the literature of a constant width body of dimension greater than $2$ or a concrete finite procedure to construct one.

The purpose of this paper is to construct concrete examples of constant width bodies in dimension three. They are constructed from some special embeddings of self-dual graphs. We also give a finite procedure to construct a $3$-dimensional constant width body from a Reuleaux polygon in dimension $2$.

For a treatment of constant width bodies and their properties, see the books \cite{BF,BY} and the surveys \cite{CG,HM}.

\section{Ball polyhedra}\label{sec:ballpoly}

The main goal of this section is to study the geometry of the intersection of finitely many congruent balls. Ball polyhedra are natural objects of study for several important problems of discrete geometry such as the Grünbaum-Heppes-Straszewicz theorem on the maximal number of diameters of finite point sets in $R^3$ \cite{Gr,He,Str}, the Kneser-Poulsen conjecture \cite{BC}, the proof of the Borsuk conjecture for finite point sets \cite{AP}, and the analogue of Cauchy's rigidity theorem for triangulated ball polyhedra \cite{BN}.
For a good reference about ball polyhedra, see \cite{KMP,BLNP,BN}.

Given $x\in R^n$ and $h>0$ we write $B(x,h)$ for the closed ball of radius $h$ centered at $x$ and $S(x,h)$ for the sphere of radius $h$ centered at $x$.

A \emph{ball polyhedron} in $R^n$ is the intersection of finitely many but at least $n$ solid spheres of radius $h>0$.
Let us consider a ball polyhedron $\Phi=\bigcap_{x\in X} B(x,h)$. Assume first that $\Phi$ is $3$-dimensional, $\Phi$ has non empty interior and for any proper subset $X^\prime\subset X$, $\Phi\neq\bigcap_{x\in X^\prime} B(x,h)$. We wish to describe the boundary of $\Phi$.

The points of $\bd\Phi$ are either singular or regular. The singular points can be divided into two sets: the $0$-singular and the $1$-singular.
The $0$-singular points are the the points $y\in\bd\Phi$ such that $X\subset B(y,h)$ and there are points $x_1,x_2,x_3\in X$ with $x_1-y,x_2-y,x_3-y$ linearly independent.
The set of $1$-singular point of $\bd\Phi$ is given by the set of $y\in R^3$ such that $X\subset B(y,h)$ and $S(y,h)\cap X$ contains at least two points and is contained in some great circle of $S(y,h)$. Lastly, the set of regular points of the boundary of $\Phi$ is given by the points $y\in \bd\Phi$ such that $X\subset B(y,h)$ and $\abs{S(y,h)\cap X}=1$.

Consequently, the set of singular points of the boundary of $\Phi$, $S(\Phi)$, consists of an embedding of a graph $G_\Phi$ whose vertices $V(\Phi)$ are the $0$-singular points of $\bd\Phi$ and whose edges correspond to sub-arcs of circles, each one of them joining a pair of points in $V(\Phi)$. Given adjacent vertices $a,b\in V(\Phi)$, we denote by $\widetilde{ab}$ the sub-arc of circle joining $a$ with $b$.
The complement of $S(\Phi)$ in the boundary of $\Phi$ consists of the regular points of $\Phi$, whose components are spherically convex open subsets of a sphere of radius $h$. In this way, a \emph{face} of $\Phi$ is defined as
$$S(x,h)\cap \Phi,$$
where $x\in X$.

It is not difficult to see that the closure of every component of the set of regular points, that is, the closure of every component of $\Phi-S(\Phi)$, is a face of $\Phi$. Furthermore, by Proposition 4.2 of \cite{KMP}, each face $\sigma \subset S(x,h)$ of $\Phi$ is spherically convex in $S(x,h)$. A $3$-dimensional ball polyhedron $\Phi$ is \emph{standard} if the intersection of two faces is either empty, a vertex of $G_\Phi$ or a single edge of $G_\Phi$. In fact, the following is proved in Section 6 of \cite{KMP}.

\begin{theorem} 
The graph $G_\Phi$ of a standard $3$-dimensional ball polyhedron $\Phi$ is simple, planar and $3$-connected.
\end{theorem}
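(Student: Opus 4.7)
The plan is to split the statement into three assertions---planarity, simplicity, and $3$-connectedness---and to treat each one separately.

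Planarity is essentially immediate. As a bounded intersection of closed balls with non-empty interior, $\Phi$ is a $3$-dimensional convex body, so $\bd\Phi$ is homeomorphic to $S^2$. By the construction recalled in the text, $G_\Phi$ is embedded in $\bd\Phi$ with vertices the $0$-singular points and edges the circular sub-arcs $\widetilde{ab}$, and the open components of the complement are precisely the faces of $\Phi$, each spherically convex and hence simply connected. Any graph embedded in $S^2$ is planar.

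For simplicity, loops are ruled out by definition since every edge joins two distinct $0$-singular points. Suppose there were a multi-edge, with arcs $\alpha,\beta$ both joining $a$ to $b$. Then $\alpha\cup\beta$ is a Jordan curve on $\bd\Phi$, and each of $\alpha,\beta$ is a shared boundary arc of exactly two faces. Since faces are connected and each one lies in a single sphere $S(x,h)$, a short incidence check shows the same pair of faces must border both arcs, giving two faces that share two distinct edges---contradicting the standardness hypothesis that any two faces meet in at most one edge.

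The main obstacle is $3$-connectedness. Assume for contradiction that $\{u,v\}$ is a $2$-vertex cut and let $H_1,H_2$ be two components of $G_\Phi\setminus\{u,v\}$; there is a Jordan curve $\gamma\subset\bd\Phi$ through $u$ and $v$ separating these components. Each face is a topological disk bounded by a cycle of $G_\Phi$, so any face whose boundary meets both $H_1$ and $H_2$ must pass through both $u$ and $v$. Inspecting the cyclic sequence of faces around $u$, the neighbors of $u$ in $H_1$ and those in $H_2$ form two contiguous cyclic arcs, so there are at least two \emph{transition} faces, and the boundary of each such transition face must contain $v$. For any two of them, $F,F'$, we have $\{u,v\}\subseteq F\cap F'$; by standardness, $F\cap F'$ is a single vertex or a single edge, and since it contains the two distinct vertices $u,v$ it must be the edge $uv\in E(G_\Phi)$. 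Repeating the count of transitions---either at $u$, or after swapping roles with $v$---produces a third face whose boundary contains both $u$ and $v$ but which is not one of the two faces bordering the edge $uv$; this yields either a second edge between $u$ and $v$ (violating simplicity) or three distinct faces meeting along the same edge (violating the fact that every edge of $\bd\Phi$ is incident to exactly two faces).

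The delicate step is the transition-counting in the last paragraph, where one must handle uniformly the cases $uv\in E(G_\Phi)$ and $uv\notin E(G_\Phi)$ and rule out the possibility that the Jordan curve passes through $u$ and $v$ in a degenerate way; this is where I expect the bulk of the care to be required.
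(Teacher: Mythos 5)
First, a point of reference: the paper does not prove this theorem at all --- it is quoted verbatim from Section 6 of \cite{KMP} (see also \cite{BLNP}), so there is no in-paper argument to compare against. Your outline is in fact the standard combinatorial proof of that result, and the overall strategy (planarity from the embedding of $G_\Phi$ in $\bd\Phi\cong S^2$; simplicity and $3$-connectedness from the standardness condition that two faces meet in at most a single vertex or edge, combined with the fact that each face is a spherically convex disk whose boundary is a cycle and that each edge borders exactly two faces) is sound. The planarity paragraph is complete as written.

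The two remaining parts, however, each contain a genuine gap. For simplicity, the sentence ``a short incidence check shows the same pair of faces must border both arcs'' is not a proof and is not even obviously true as stated: the arc $\alpha$ is bordered by the two faces $\tau(x),\tau(y)$ determined by the two spheres through $\alpha$, the arc $\beta$ by a possibly different pair $\tau(u),\tau(w)$, and nothing forces these pairs to coincide. What does work is to apply standardness directly to, say, $\tau(x)\cap\tau(u)$, which contains the two distinct vertices $a,b$ and hence must be a single edge joining them; one then has to run a short case analysis (or argue via the bigon bounded by $\alpha\cup\beta$) to reach a contradiction with the fact that each edge lies on exactly two faces. That analysis is the content of the step and is missing. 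For $3$-connectedness, two things are needed that you do not supply: (i) connectivity and $2$-connectivity must be established first, since your transition count presupposes that $u$ (and $v$) has neighbours in at least two components of $G_\Phi\setminus\{u,v\}$, which fails if $v$ alone already separates; and (ii) the production of the \emph{third} face through both $u$ and $v$ in the case $uv\in E(G_\Phi)$ --- which is exactly where the contradiction with ``each edge borders two faces'' comes from --- is deferred with the remark that ``this is where I expect the bulk of the care to be required.'' That deferred step is the heart of the proof: one must check that, in the rotation at $u$, after discarding the two positions adjacent to the neighbour $v$, the remaining linear sequence of neighbours still contains vertices of two different components and hence a consecutive transition pair, yielding a face incident to $u$ and $v$ distinct from the two faces along the edge $uv$. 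The argument does go through, but as submitted the proposal is an outline with its decisive steps acknowledged rather than carried out.
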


As a consequence, we have the Euler-Poincaré formula $v-e+f=2$ for any $3$-dimensional ball polyhedron with $v$ vertices, $e$ edges and $f$ faces. 

Now let $\Phi\subset R^n$ be a ball polyhedron. Suppose $\Phi=\bigcap_{x\in X} B(x,h)$ but for any proper subset $X^\prime\subset X$, $\Phi\not= \bigcap_{x\in X^\prime} B(x,h)$. A \emph{supporting sphere} $S^l$ is a sphere of dimension $l$, where $0\leq l\leq n-1$, which can be obtained as the intersection of some of the spheres in $\{S(x,h)\}_{x\in X}$.
We say that an $n$-dimensional ball polyhedron $\Phi$ is \emph{standard} if for any supporting sphere $S^l$ the intersection $\Phi\cap S^l$ is spherically convex in $S^l$. If this is so, we call $\sigma$ a \emph{face} of $\Phi$ if $\sigma= \Phi\cap S^l$, for some supporting sphere $S^l$ of $\Phi$, where the dimension of $\sigma$ is $l$. Bezdek et. al. \cite{BLNP} proved the following theorem.
 
\begin{theorem} 
Let $\Phi\subset R^n$ be a standard ball polyhedron. Then the faces of $\Phi$ form the closed cells of a finite CW-decomposition of the boundary of $\Phi$. Furthermore, we have following Euler-Poincaré formula
$$1+ (-1)^{n+1}= \sum_{i=0}^n (-1)^i f_i(\Phi)$$
where $f_i(\Phi)$ denotes the number or $i$-dimensional faces of $\Phi$.
\end{theorem}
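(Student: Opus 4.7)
The plan is to build the CW-decomposition of $\bd\Phi$ by induction on the dimension of supporting spheres, and then to read off the Euler--Poincaré formula from the topology of $\bd\Phi$. First I would verify that each $l$-dimensional face $\sigma=\Phi\cap S^l$ is a closed topological $l$-cell: spherical convexity of $\sigma$ in $S^l$ forces $\sigma$ into some open hemisphere of $S^l$, and gnomonic projection from the antipode of an interior point of that hemisphere carries $\sigma$ homeomorphically onto a compact convex subset of $R^l$ of full dimension, hence to a closed $l$-ball. This handles the cells themselves.

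Next I would show that these cells fit together into a finite CW-structure on $\bd\Phi$. The key combinatorial fact is that the relative boundary of $\sigma$ inside $S^l$ is exactly the union of the sets $\Phi\cap S^{l-1}$ taken over all supporting spheres $S^{l-1}\subset S^l$. Indeed, a point $y\in\sigma$ lies in the relative interior of $\sigma$ if and only if $y$ is strictly interior to every $B(x,h)$ whose bounding sphere does not already contain $S^l$; if any additional sphere $S(x,h)$ passes through $y$, then $S(x,h)\cap S^l$ is an $(l-1)$-dimensional supporting sphere through $y$, and such spheres sweep out the relative boundary of $\sigma$. Consequently the relative interiors of distinct faces are pairwise disjoint, the relative boundary of each face is a finite union of faces of smaller dimension, and the union of all faces covers $\bd\Phi$ since every boundary point of $\Phi$ lies on at least one $S(x,h)$. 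The finiteness of $X$ bounds both the number of supporting spheres and the total number of cells.

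With the CW-structure in place the Euler--Poincaré formula is immediate: $\Phi$ is a compact convex body of full dimension in $R^n$, so $\bd\Phi$ is homeomorphic to $S^{n-1}$ and has Euler characteristic $1+(-1)^{n-1}=1+(-1)^{n+1}$; computing the same invariant from the CW-decomposition yields $\sum_{i=0}^{n}(-1)^i f_i(\Phi)$, with $f_n(\Phi)=0$ since supporting spheres have dimension at most $n-1$. I expect the main obstacle to be the second step: the clean combinatorial identification of the relative boundary of each face with a union of lower-dimensional faces, and the verification of the CW-axioms (in particular, closure-finiteness and the matching of attaching maps) that this identification entails. Standardness, which demands spherical convexity of \emph{every} supporting-sphere section of $\Phi$ rather than only the top-dimensional ones, is precisely the hypothesis crafted to make this boundary decomposition go through uniformly across all dimensions.
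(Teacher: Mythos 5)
The paper does not prove this statement: it is quoted verbatim from Bezdek, L\'angi, Nasz\'odi and Papez \cite{BLNP}, so there is no in-paper proof to compare against. Your outline is, in substance, the argument given in that reference: gnomonic (central) projection to identify each spherically convex face with a compact convex body in $R^l$, hence a closed $l$-cell; a boundary analysis showing the cells fit into a finite CW-structure; and the Euler characteristic of $\bd\Phi\cong S^{n-1}$ to get $1+(-1)^{n+1}=\sum_i(-1)^i f_i(\Phi)$. Steps one and three are fine as stated (note $f_n(\Phi)=0$, and that "spherically convex'' already presupposes containment in an open hemisphere, so no separate argument is needed there).

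The real content is the step you yourself flag, and it is worth being concrete about where it can fail. Your characterization "rel.\ int.\ $\sigma$ $=$ points strictly interior to every $B(x,h)$ with $S(x,h)\not\supset S^l$'' uses that the interior of a finite intersection of convex caps with a common interior point equals the intersection of their interiors; this is where you need $\dim\sigma=l$ (guaranteed by the paper's definition of face). More seriously, $S(x,h)\cap S^l$ need not be an $(l-1)$-sphere: it can be a single point (tangency) or empty, and even when it is an $(l-1)$-sphere, $\Phi\cap S^{l-1}$ can have dimension strictly less than $l-1$, in which case it is a face only if it is also realized as $\Phi\cap S^k$ for some $k$-dimensional supporting sphere with $k=\dim(\Phi\cap S^{l-1})$. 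Establishing that every such degenerate piece of the relative boundary is indeed a face in this sense --- and that relative interiors of distinct faces are disjoint --- is precisely the work that standardness is invoked for in \cite{BLNP}, and your proposal asserts rather than proves it. So: correct strategy, matching the source, but with the decisive lemma (relative boundary of a face is a union of lower-dimensional faces) left as a stated expectation rather than an argument.
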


\section{Reuleaux polyhedra}\label{sec:reupoly}

Following the ideas of Sallee \cite{S}, we define a \emph{Reuleaux Polyhedron} as a convex body $\Phi\subset R^n$ satisfying the following properties:
\begin{itemize}
\item there is a set $X\subset R^n$ with $\Phi=\bigcap_{x\in X} B(x,h)$,
\item $\Phi$ is a standard ball polyhedron, and
\item the set $V(\Phi)$ of $0$-singular points of $\bd\Phi$ is $X$.
\end{itemize}

In dimension $2$, Reuleaux polyhedra are exactly the Reuleaux polygons \cite{BY} and it is well known that Reuleaux polyhedra, except in dimension $2$, are not bodies of constant width. The simplest example of Reuleaux polyhedron is the Reuleaux tetrahedron which is the $3$-dimensional analogue of the Reuleaux triangle, that is, the intersection of $4$ solid spheres of radius $h$ centered at the vertices of a regular tetrahedron of side length $h$. The corresponding self-dual graph for the Reuleaux tetrahedron is the complete graph $K_4$ with $4$ vertices. 
$3$-dimensional Reuleaux polyhedra will be the key to construct, in Section \ref{sec:meipoly}, examples of $3$-dimensional constant width bodies.

\begin{theorem}\label{thm:reuleaux}
Let $\Phi\subset R^3$ be a Reuleaux polyhedron. Then, $G_\Phi$ is a self-dual graph, where the automorphism $\tau$ is given by: $\tau(x)=S(x,h)\cap \Phi,$ for every $x\in X$. Furthermore, $\tau$ is an involution, that is, a vertex $x$ belongs to the cell $\tau (y)$ if and only if the vertex $y$ belongs to the cell $\tau (x)$.
\end{theorem}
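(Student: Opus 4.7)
The plan is to establish the theorem in three stages: that $\tau$ is a bijection from $V(G_\Phi)=X$ to the set of faces of $\Phi$, that it satisfies the stated incidence-symmetry property, and that these upgrade to a graph isomorphism $G_\Phi\to G_\Phi^*$.

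Both the bijection and the involution are quick. By the definition of a Reuleaux polyhedron every face of $\Phi$ equals $S(x,h)\cap\Phi$ for some $x\in X$, so $\tau$ is surjective; the minimality built into the ball-polyhedron setup forces each $\tau(x)$ to be a genuine $2$-dimensional face, and such a spherical cap of radius $h$ determines its sphere (hence its center) uniquely, giving injectivity. For the involution, a vertex $x\in X$ lies on $\tau(y)=S(y,h)\cap\Phi$ exactly when $\abs{x-y}=h$, because the membership $x\in\Phi$ is automatic ($x\in X\subset\bd\Phi$). Symmetry of Euclidean distance then yields $x\in\tau(y)\iff y\in\tau(x)$ immediately.

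The main work, and the chief obstacle, is the preservation of adjacency: I must show that $u\sim v$ in $G_\Phi$ if and only if the faces $\tau(u)$ and $\tau(v)$ share an edge. The tool is the preceding Theorem (that $G_\Phi$ is simple, planar and $3$-connected, and therefore so is its dual $G_\Phi^*$) combined with the classical characterization that in a $3$-connected planar simple graph two vertices are adjacent if and only if they are incident to at least two common faces; the non-trivial direction is the observation that otherwise $\{u,v\}$ would be a $2$-vertex separator, contradicting $3$-connectedness.

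With this in hand, the conclusion follows from the chain of equivalences: $u\sim v$ in $G_\Phi$ iff there exist distinct $p,q\in X$ with $u,v\in\tau(p)\cap\tau(q)$ (characterization applied to $G_\Phi$), iff $p,q\in\tau(u)\cap\tau(v)$ (this is where the involution does the work, translating each of the four incidences), iff the faces $\tau(u)$ and $\tau(v)$ share at least two common boundary vertices, iff they share an edge in $G_\Phi$ (characterization applied to $G_\Phi^*$), iff $\tau(u)\sim\tau(v)$ in $G_\Phi^*$. Hence $\tau$ is a graph isomorphism $G_\Phi\to G_\Phi^*$, which is precisely the self-duality claimed.
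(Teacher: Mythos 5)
Your proof is correct, and its involution step coincides with the paper's: in both, the whole point is that a vertex $x$ lies on the face $\tau(y)=S(y,h)\cap\Phi$ exactly when $d(x,y)=h$, a symmetric condition. Where you genuinely diverge is in the adjacency-preservation step. The paper argues directly from standardness: if $\tau(x)$ and $\tau(y)$ meet in an edge $\widetilde{ab}$, then $d(x,a)=d(x,b)=d(y,a)=d(y,b)=h$, so $x$ and $y$ both lie on $\tau(a)\cap\tau(b)$, and standardness (the intersection of two faces is empty, a vertex, or a single edge) forces $\tau(a)\cap\tau(b)$ to be the edge $\widetilde{xy}$; the converse is symmetric. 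You instead route the argument through the combinatorics supplied by the preceding theorem: $3$-connectivity yields the classical ``adjacent iff incident to at least two common faces'' characterization, which you apply once to $G_\Phi$ and once to its dual, with the distance involution translating the four incidences in the middle. Both are valid. The paper's version is shorter and uses the standardness hypothesis exactly where it is needed, with no auxiliary graph theory; yours trades that geometric input for a purely combinatorial lemma, at the price of having to justify the bijectivity of $\tau$, the simplicity and $3$-connectivity of the dual, and the separator argument behind the characterization --- all of which you correctly flag. One detail worth making explicit if you write this up: the combinatorial incidence ``$u$ is a vertex of the face $\tau(p)$ in the cell decomposition'' must be identified with the point-set condition $d(u,p)=h$; this identification is what allows the involution to act on the incidence lattice, and it is used silently in both proofs.
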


\begin{proof} Let $\Phi=\bigcap_{x\in X} B(x,h)$ and suppose that $X$ is the set of $0$-singular points of $\Phi$. Take $x,y \in X$, the corresponding dual faces are $\tau(x) = S(x,h)\cap\Phi$ and $\tau(y)=S(y,h)\cap\Phi$. 
Assume first that the faces $\tau(x)$ and $\tau(y)$ intersect on the edge $\widetilde{ab}$ of $G_\Phi$, where $a,b \in X$ and $\widetilde{ab}$ is the shortest arc joining $x$ and $y$ in the circle $S(x,h)\cap S(y,h)$. This implies that $d(x,a)=d(x,b)=d(y,a)=d(y,b)=h$ and therefore that $x,y$ are both vertices of the dual faces $\tau(a)\cap\tau(b)$. Since $\Phi$ is a standard ball polyhedron, then $\tau(a) = S(a,h)\cap\Phi$ and $\tau(b)=S(b,h)\cap\Phi$ intersect on the edge $\widetilde{xy}$. This proves that if $\tau(x)\cap \tau (y)\not= \emptyset$ then $\{x,y\}$ is an edge of $G_\Phi$. The proof of the converse is completely analogous.

Furthermore, if the vertex $x$ belongs to the dual face $\tau(y)=S(y,h)\cap\Phi$, then $d(x,y)=h$ and therefore the vertex $y$ belongs to the dual face $\tau(x)=S(x,h)\cap\Phi$.
\end{proof}
 
An important property of the embedding of the graph $G_\Phi$ in $R^3$ is that for every pair of points $x,y \in X$,
\begin{equation}\label{eq:embedding}
d(x,y)\leq h \text{ and }
d(x,y)= h \text{ iff } x \text{ is in the dual face of } y.
\end{equation}

A $3$-connected planar graph $G$ that admits an automorphism $\tau$ which is an involution ($x\notin \tau(x)$ and $x\in \tau(y)$ iff $y\in \tau(x)$) will be called an \emph{involutive self-dual graph}. 
A {\it metric embedding} of an involutive self-dual $G$ in $R^3$, is an embedding of the vertices $X$ of $G$ into $R^3$ as a geometric graph in such a way that \eqref{eq:embedding} holds. Examples of metric embeddings of self-dual graphs are $K_4$ with the vertices of the equilateral tetrahedron and the two concrete examples in Figure \ref{fig:reuleaux}.

The problem of characterizing when an involutive self-dual graph admits a metric embedding is an interesting one. The spherically self-dual polyhedra constructed by Lovasz in \cite{Lo1}, in connection with the distance problem, have this property.

\begin{theorem}\label{teo}
Let $X\subset R^3$ be the vertices of a metric embedding of the involutive self-dual graph $G$, then $\cap_{x\in X}B(x,h)$ is a Reuleaux polyhedron. Furthermore, the face polyhedral structure of $\cap_{x\in X}B(x,h)$ is lattice isomorphic to the polyhedral structure (points, edges and faces) of the planar graph $G$.
\end{theorem}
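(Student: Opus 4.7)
The strategy is to exhibit the claimed isomorphism explicitly: each vertex $x$ of $G$ realizes a $0$-singular vertex $x\in\bd\Phi$, each face $\tau(x)$ of $G$ realizes the spherical region $\sigma_x:=S(x,h)\cap\Phi$, and each edge $\{x,x'\}$ of $G$ realizes a circular arc on $\bd\Phi$ joining $x$ to $x'$. Once this correspondence is in hand, $\Phi$ is automatically standard with $V(\Phi)=X$, hence a Reuleaux polyhedron whose face lattice is that of the planar graph $G$.

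First, I would show that every $x\in X$ lies on $\bd\Phi$ as a $0$-singular point. By \eqref{eq:embedding}, $d(x,y)\le h$ for every $y\in X$, so $x\in\Phi$; because $G$ is $3$-connected planar, the face $\tau(x)$ has at least three vertices $y_1,\ldots,y_k$, and by involution each $d(x,y_i)=h$, placing $x$ on each $S(y_i,h)$ and hence on $\bd\Phi$. To get $0$-singularity I also need the displacements $y_i-x$ to span $R^3$; excluding the case that all $y_i$ lie in a common plane through $x$ is the most delicate point. I would handle it by combining the planar cyclic order of $y_1,\ldots,y_k$ around $\tau(x)$ with the dual-edge correspondence linking those bounding edges to the neighbors of $x$ in $G$, producing a configuration that violates the $3$-connectedness of $G$.

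Next, I would analyze each $\sigma_x$. It is spherically convex in $S(x,h)$ (since $\Phi$ is convex) and contains every $y_i\in\tau(x)$. The only bounding circles of $\sigma_x$ come from $S(x,h)\cap S(z,h)$ with $z\in\tau(x)$: any other $z\in X$ has $d(x,z)<h$ by \eqref{eq:embedding}, so its sphere constraint is locally slack near $\sigma_x$ and contributes no boundary arc. For consecutive $y_i,y_{i+1}$ on the boundary cycle of $\tau(x)$ in $G$, the edge $\{y_i,y_{i+1}\}$ borders $\tau(x)$ and a second face $\tau(x')$; self-duality then identifies $\{x,x'\}$ as an edge of $G$, and involution places $x'$ on $S(y_i,h)\cap S(y_{i+1},h)$, so the arc $\widetilde{xx'}$ on this circle is the edge of $\bd\Phi$ separating $\sigma_x$ from $\sigma_{x'}$. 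Assembling these local pictures over all $x$ gives a CW-decomposition of $\bd\Phi$ combinatorially isomorphic to the (unique, by Whitney) planar embedding of $G$, and standardness together with the lattice isomorphism follow at once.

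The main obstacle is the nondegeneracy claim in the second paragraph: ensuring that no $\sigma_x$ collapses to a lower-dimensional set and equivalently that the vertices of $\tau(x)$ do not sit on a common great circle of $S(x,h)$. Once coplanarity is excluded, the remaining items---spherical convexity, non-redundancy of far-away sphere constraints, cyclic matching of arcs, and the global assembly step---should follow by routine geometric and combinatorial arguments from \eqref{eq:embedding} and the involutive self-duality of $G$.
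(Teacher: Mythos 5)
Your plan is a direct cell-by-cell verification, which is not the route the paper takes, and as written it has two genuine gaps. The first is the one you flag yourself: to make $x$ a $0$-singular point you must show that the vertices $y_1,\dots,y_k$ of the dual face $\tau(x)$, all at distance $h$ from $x$, do not lie on a single great circle of $S(x,h)$. Your proposed fix --- deriving a contradiction with $3$-connectedness from the cyclic order of the $y_i$ --- cannot work as stated, because $3$-connectedness is a property of the abstract graph and is already assumed; a degenerate (coplanar) placement of the $y_i$ would not change the abstract graph at all. Any argument here must use the metric condition \eqref{eq:embedding} quantitatively, and you give no indication of how. The second gap is the claim that a sphere $S(z,h)$ with $d(x,z)<h$ ``is locally slack near $\sigma_x$ and contributes no boundary arc.'' The inequality $d(x,z)<h$ says the constraint is slack at the point $x$, but $\sigma_x=S(x,h)\cap\Phi$ lies at distance exactly $h$ from $x$, on the far side of $\Phi$; nothing prevents $S(z,h)$ from cutting through that region. (Indeed the arcs bounding $\sigma_x$ lie on spheres $S(x',h)$ with $x'$ a \emph{neighbour} of $x$ in $G$, which in general is not the same set as the vertex set of $\tau(x)$.) Identifying which spheres actually carve the boundary of $\sigma_x$ is essentially equivalent to knowing already that $\Phi$ is standard with vertex set $X$, so this part of your argument is close to circular.

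The paper sidesteps all of this with a counting argument. Using the involution, $\deg_G(x)$ equals the number of $y\in X$ with $d(x,y)=h$, so the diameter graph $D(X)$ has exactly $2n-2$ edges, the maximum allowed by the Gr\"unbaum--Heppes--Straszewicz (V\'azsonyi) theorem; the characterization of the extremal configurations in \cite{KMP} then yields directly that $X$ is the vertex set of the standard ball polyhedron $\bigcap_{x\in X}B(x,h)$, and the lattice isomorphism follows. All the geometric nondegeneracy you are struggling with is packaged inside that extremal characterization; if you insist on the direct approach, you would in effect have to reprove that part of the theorem.
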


\begin{proof}
Let $\Phi=\cap_{x\in X}B(x,h)$. It will be enough to prove that the vertices of the boundary of $\Phi$ coincide with X. In this is case both lattices are isomorphic because, for every point in $X$, the dual face (as a point-set) of the face structure of the boundary of $\Phi$ and the dual face of the abstract polyhedron determined by the planar graph $G$ coincide.

For that purpose, let us prove first that the set $X\subset R^3$ admits $2n-2$ diameters, where $\abs{V} = n$. We start proving that for every $x\in X$, deg$(x, G)=$ deg$(x, D(X))$, where $D(X)$ is the graph whose vertex set is the set $X$ and whose edges are pairs $\{x,y\}\subset X$ such that the segment $xy$ is a diameter of $X$. Indeed, the degree of $x$ is equal to the number of faces of $G$ containing $x$ and since G is an involutive self-dual graph this number is equal to the number of vertices of the dual face of $x$, which is the degree of $x$ in $D(V)$. This proves that the number of edges of $G$ and of $D(V)$ coincide and, since $G$ is a self-dual graph with $2n-2$ edges, the Euler formula gives that $D(V)$ has $2n-2$ edges.

By the Gr\"unbaum-Heppes-Straszewicz Theorem (see \cite{KMP}), since every vertex of $X$ has degree grater than two, then the vertices of the boundary of $\Phi$ coincide with X.
\end{proof}

In Section \ref{sec:construct}, we shall construct an infinite family of Reuleaux polyhedra in $R^3$.

\begin{figure}[ht]
\includegraphics[width=.2\linewidth]{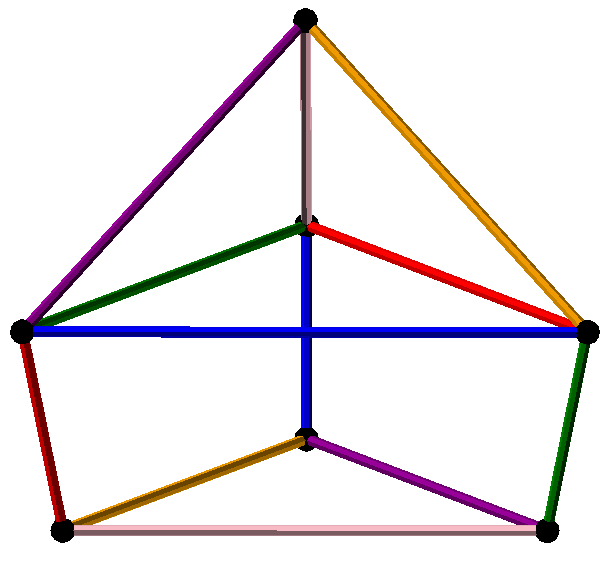}
\includegraphics[width=.2\linewidth]{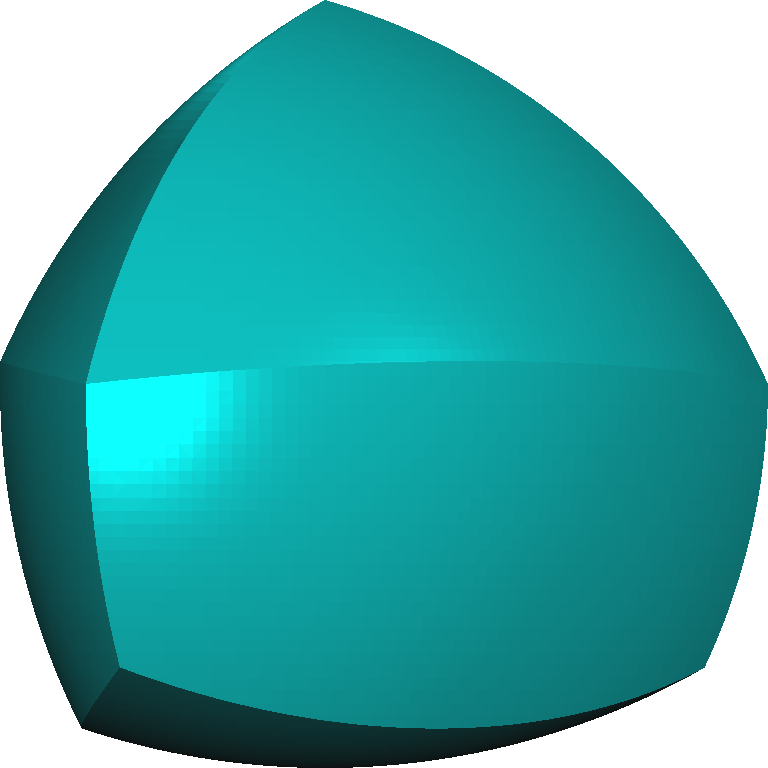}\qquad\quad
\includegraphics[width=.2\linewidth]{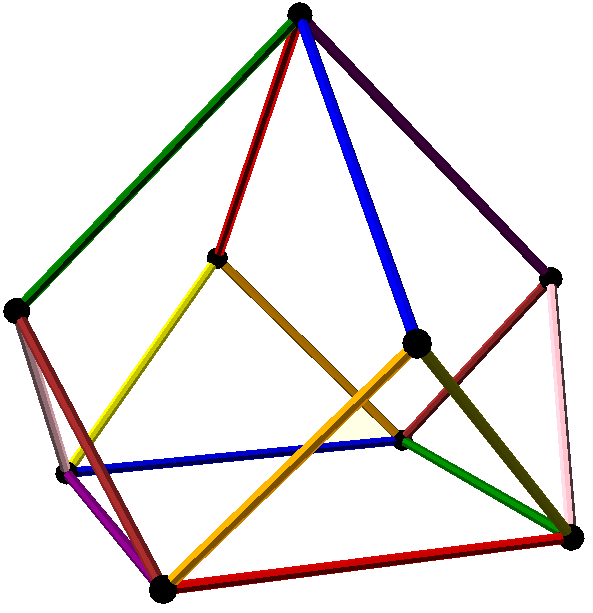}
\includegraphics[width=.2\linewidth]{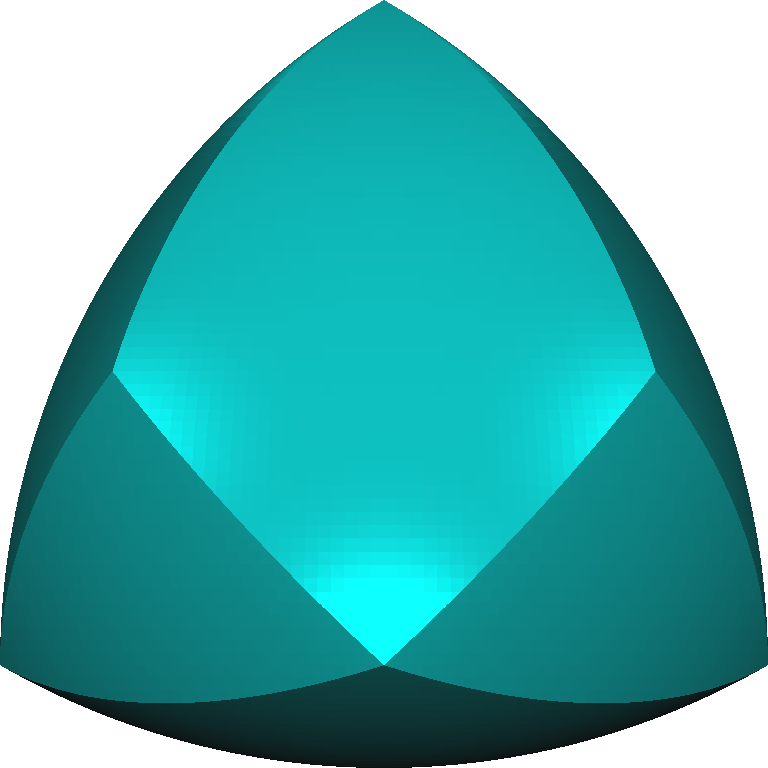}
\\[12pt]
\includegraphics[width=.2\linewidth]{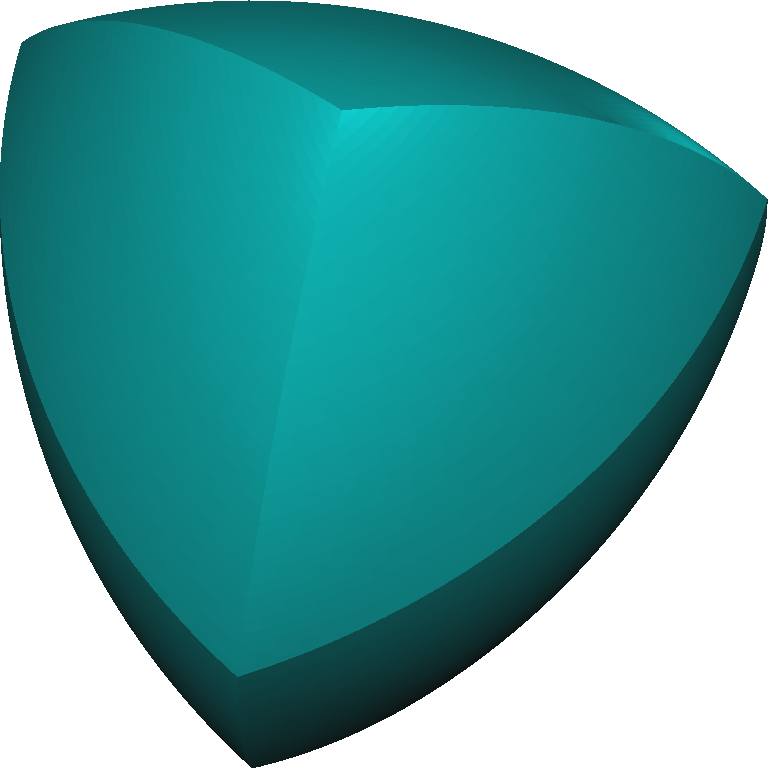}
\includegraphics[width=.2\linewidth]{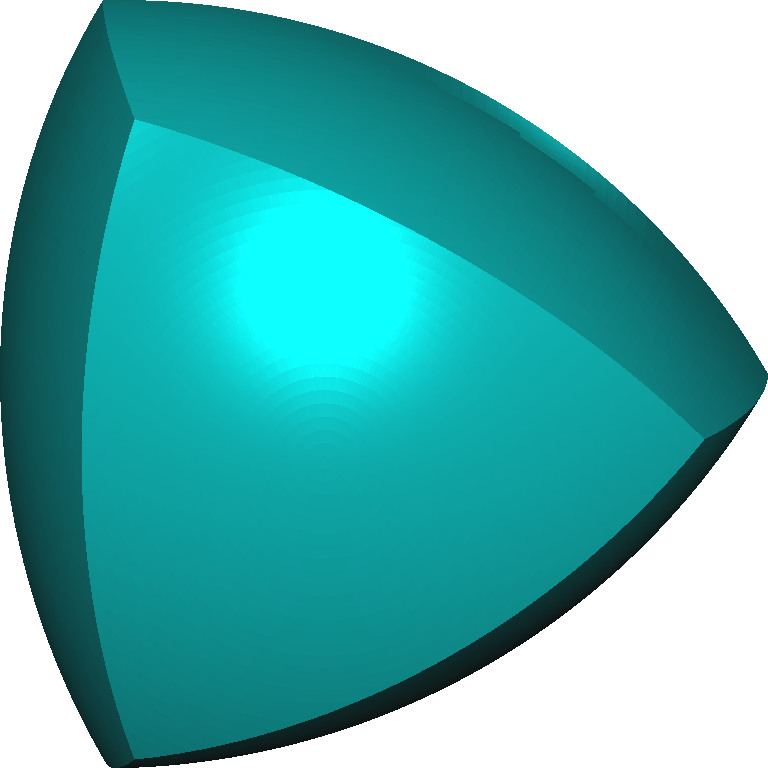}\qquad\quad
\includegraphics[width=.2\linewidth]{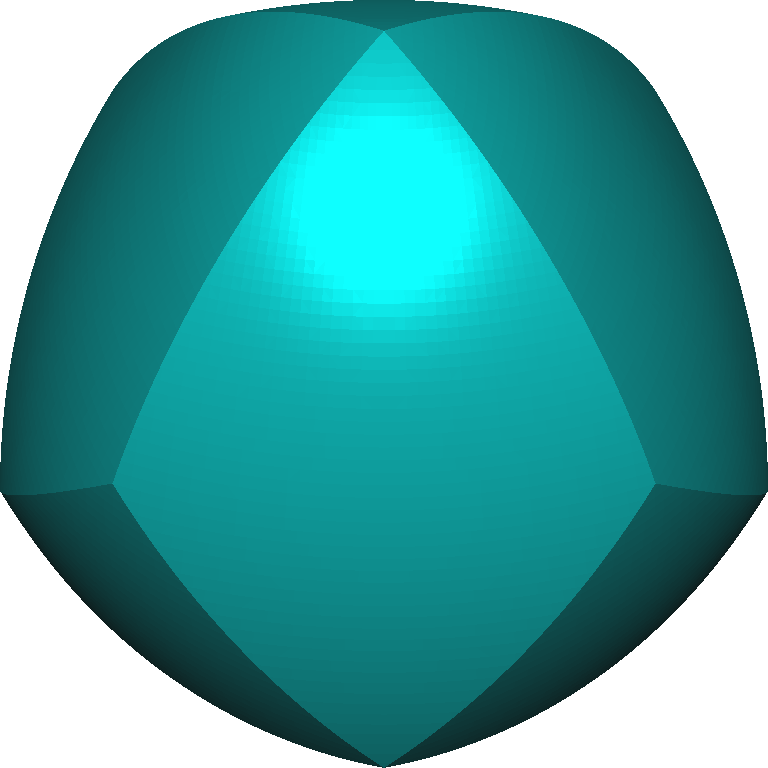}
\includegraphics[width=.2\linewidth]{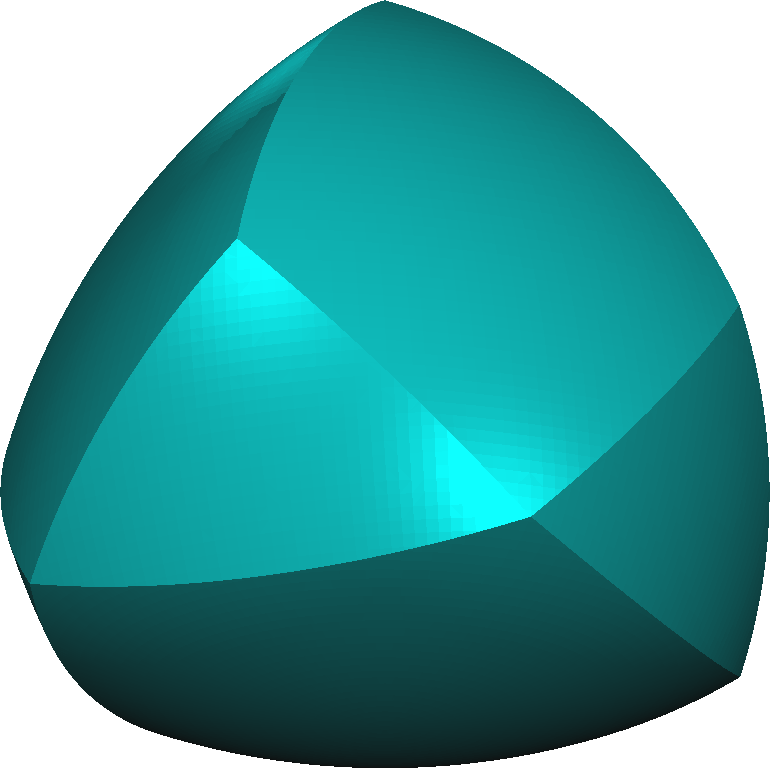}
\caption{Reuleaux polyhedra.}
\label{fig:reuleaux}
\end{figure}

\section{Meissner polyhedra}\label{sec:meipoly}

The two classic Meissner solids were constructed by performing surgery on one of the edges of each pair of dual edges of the Reuleaux tetrahedron. This procedure was described in Boltianski and Yaglom's book \cite{BY}. The purpose of this section is to generalize this procedure for Reuleaux polyhedra in $R^3$.

Let $G_\Phi\subset\bd\Phi$ be the metric embedding of the self-dual graph $G_\Phi$ as the singular points of the Reuleaux polyhedron $\Phi=\bigcap_{x\in X} B(x,h)$, where $X$ is the set of vertices of $V(G_\Phi)$.
Let us fix our attention on an edge $\widetilde{xy}\in E(G_\Phi)$. Then, there is dual edge $\widetilde{ab}\in E(G_\Phi)$ with the following properties:
\begin{itemize}
\item $d(x,a)=d(x,b)=d(y,a)=d(y,b)=h$.
\item The edge $\widetilde{xy}$ is contained in $S(a,h)\cap S(b,h)$, that is, $\widetilde{xy}$ is the sub-arc of the circle with center at $\frac{a+b}{2}$, between $x$ and $y$, contained in the plane orthogonal to the segment $\overline{ab}$.
\item Similarly, the edge $\widetilde{ab}$ is contained in $S(x,h)\cap S(y,h)$, that is, $\widetilde{ab}$ is the sub-arc of the circle with center at $\frac{x+y}{2}$, between $a$ and $b$, contained in the plane orthogonal to $\overline{xy}$.
\end{itemize}

Denote by $\tau(x)$ the dual face of the vertex $x$ in $\Phi$, that is 
$$\tau(x) = S(x,h)\cap \Phi.$$
By Lemma 1.1 (2) of \cite{BN}, $\tau(x)$ is a spherically convex closed subset of the sphere $S(x,h)$. As a subset of the boundary of $\Phi$, the face $\tau(x)$, of the ball polyhedron $\Phi$ is bounded by a finite number for sub-arcs of circles, each one an edge of $G_\Phi$. One of these edges is $\widetilde{ab}$.

Before we continue, we need a pair of definitions. If $\Phi$ is a convex body and $P\in\bd\Phi$, then a \emph{normal chord} of $\Phi$ at $P$ is a segment $PQ$ with $Q\in\bd\Phi\setminus\{P\}$ that is orthogonal to a supporting plane of $\Phi$ at $P$. Note that if $P$ is a regular point then this plane, and therefore $Q$, are unique. If $PQ$ is also a a normal chord of $\Phi$ at $Q$ then we call it a \emph{binormal chord}.

We now follow closely the procedure described in \cite{BY}, when they perform surgery on one of the edges of the Reuleaux tetrahedron.

Let $\Sigma_a\subset S(a,h)$ be the shortest geodesic joining $x$ and $y$. By the convexity of the faces of $\Phi$, $\Sigma_a\subset \tau (a) $. Similarly, let $\Sigma_b\subset \tau(b)$ be the shortest geodesic joining $x$ and $y$ in $S(b,h)$.
Denote by $\Sigma(xy)$ the region of the boundary of $\Phi$ between the arcs $\Sigma_a$ and $\Sigma_b$. Note that the edge $\widetilde{xy}$ is contained in $\Sigma(xy)$ and with the exception of these points, all points of the boundary of $\Phi$ contained in $\Sigma(xy)$ are regular and belong either to the sphere $S(a,h)$ or the sphere $S(b,h)$.

The above implies, in particular that if $P$ is a point in the interior of an edge $\widetilde{ab}\in E(G_\Phi)$ dual to $\widetilde{xy}\in E(G_\Phi)$ and if $PQ$ is a normal chord of $\Phi$ at $P$, then either $Q$ is a vertex of $\Phi$ and the length of $PQ$ is $h$ or, $Q$ belongs to $\Sigma(xy)$.

We denote by $W(x,y)$, the \emph{wedge along the segment} $\overline{xy}$, the surface of revolution with axis the line in $R^3$ containing the segment $\overline{xy}$, between the circular arc $\Sigma_a$ and the circular arc $\Sigma_b$. 
So the wedge $W(x,y)$ is the union of all circular arcs of radius $h$ between $x$ and $y$ with centers at points of the circular arc $\widetilde{ab}$ contained in the boundary of $\Phi$. We leave as an exercise to the reader to prove that the wedge $W(x,y)$ is contained in $\Phi$.
 
Let us modify the boundary of $\Phi$ by replacing $\Sigma(xy)$ with the wedge $W(x,y)$.
We denote by $\Sigma(\Phi)$ the surface obtained from the boundary of $\Phi$ by performing surgery on one edge of each pair of dual edges of the self-dual graph $G_\Phi$.

\begin{theorem}
Let $\Phi\subset R^3$ be a Reuleaux polyhedron. The surface $\Sigma(\Phi)$ obtained from the boundary of $\Phi$ by performing surgery on one edge of each pair of dual edges of the self-dual graph $G_\Phi$ is the boundary of a constant width body.
\end{theorem}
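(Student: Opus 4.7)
The plan is to show that the surface $\Sigma(\Phi)$ bounds a convex body $K\subset R^3$ of diameter $h$ in which every boundary point is an endpoint of some chord of length exactly $h$. The theorem will then follow from the classical characterization (see, e.g., \cite{CG}) of convex bodies of constant width as complete convex bodies of the given diameter.

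First I would handle diameter and convexity. Since each wedge $W(x,y)$ lies in $\Phi$ and shares its topological boundary $\Sigma_a\cup\Sigma_b$ with the removed region $\Sigma(xy)$, the modified surface $\Sigma(\Phi)$ is a closed embedded surface bounding a compact region $K\subset\Phi$; adjacent vertices $x,y$ of $G_\Phi$ remain in $\bd K$ at distance $\abs{xy}=h$, so $\operatorname{diam}(K)=h$. For convexity, the key observation is that $\Sigma(\Phi)$ is $C^1$ across every transition curve $\Sigma_c$: for $P\in\gamma_c$, the outward normal to the wedge is $(P-c)/h$, which coincides with the outward normal to the sphere $S(c,h)$, so the wedge patch joins tangentially to the adjacent old-face patch. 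Consequently every tangent plane to $\Sigma(\Phi)$ at a regular point is tangent to some sphere $S(c',h)$ with $c'\in\Phi$ (either an original vertex, if the point is on an old face, or a point of a preserved edge, if the point is on a wedge), and this tangent plane supports $K$ because $K\subset\Phi\subset B(c',h)$. Existence of a supporting hyperplane at every regular boundary point forces $K$ to be convex.

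Next I would exhibit, for each $P\in\bd K$, a partner $Q\in\bd K$ with $\abs{PQ}=h$, by a short case analysis. If $P$ is a regular point of a (possibly trimmed) spherical face $\tau(c)\cap K$ or lies on a transition curve $\Sigma_c$, take $Q=c$. If $P$ lies in the relative interior of a preserved edge $\widetilde{ab}$ whose dual edge $\widetilde{xy}$ was surgered, take $Q$ to be any point of the arc $\gamma_P\subset W(x,y)$. If $P$ lies in the relative interior of the wedge $W(x,y)$, take $Q=c$, where $c\in\widetilde{ab}$ is the unique center of the arc through $P$. Finally, if $P$ is a vertex of $\Phi$, take $Q$ to be any neighbor of $P$ in $G_\Phi$. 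In every case $\abs{PQ}=h$ and $Q\in\bd K$ by construction.

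The main obstacle will be the convexity argument, which rests on two geometric facts: first, the tangency of each wedge to the spheres $S(a,h)$ and $S(b,h)$ along the transition curves $\Sigma_a$ and $\Sigma_b$, needed for $C^1$-smoothness of $\Sigma(\Phi)$; and second, the identification of every tangent plane of $\Sigma(\Phi)$ at a regular point with the tangent plane to some sphere of radius $h$ centered at a point of $K$, which is what makes the supporting-hyperplane argument work. Once convexity is in hand, the case-by-case construction of partners yields completeness of $K$, and the classical completeness characterization gives constant width $h$.
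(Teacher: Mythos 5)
Your overall architecture (show every boundary point has a partner at distance $h$, bound the diameter by $h$, then invoke a completeness/Pál-type characterization) is essentially the paper's, and your partner-point case analysis matches theirs. But there is a genuine gap at the single step that carries all the difficulty: you assert $\operatorname{diam}(K)=h$ after exhibiting only a pair at distance $h$, i.e.\ you prove $\operatorname{diam}(K)\geq h$ and never prove $\operatorname{diam}(K)\leq h$. This upper bound is not free: the Reuleaux polyhedron $\Phi$ itself has diameter strictly greater than $h$ (for the Reuleaux tetrahedron the midpoints of opposite edges are at distance about $1.025h$), and the entire point of the surgery is to remove the portions of $\bd\Phi$ realizing those excess distances. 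Without the upper bound, your completeness criterion says nothing, since a complete body must realize its \emph{diameter} at every boundary point, not some smaller length $h$. The same omission undermines your convexity argument: for $P$ in the interior of a wedge on the arc centered at $c'\in\widetilde{ab}$, you claim $K\subset\Phi\subset B(c',h)$, but $\Phi\not\subset B(c',h)$ precisely because of the excess diameters; establishing $K\subset B(c',h)$ is equivalent in difficulty to the missing diameter bound, so the supporting-plane argument as written is circular.

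The paper closes this gap with a double-normal argument that you would need to reproduce: if $PQ$ is a diametral chord of $\Sigma(\Phi)$, it is normal to the surface at both ends, and all points strictly between $P$ and $Q$ are interior to $\Phi$. If $P$ is regular on a face $\tau(x)$, the normal line at $P$ passes through the vertex $x\in\bd\Phi$, forcing $Q=x$ and $\abs{PQ}=h$; if $P$ is regular on a wedge $W(x,y)$, the normal passes through a point of the dual edge $\widetilde{ab}\subset\bd\Phi$, with the same conclusion; and if $P$ lies in the interior of a preserved edge $\widetilde{ab}$, a normal chord of $\Phi$ at $P$ either ends at a vertex (length $h$) or ends in the region $\Sigma(xy)$ that the surgery removed, which is impossible. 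A small additional slip: adjacent vertices $x,y$ of $G_\Phi$ need not satisfy $d(x,y)=h$; the pairs at distance $h$ are a vertex and the vertices of its \emph{dual} face (e.g.\ $x$ and $a$), which is what you should use for the lower bound on the diameter.
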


\begin{proof} The proof consist of two steps. In the first step we prove that for every point $P$ in the surface $\Sigma(\Phi)$, there is a point $Q$ in the surface $\Sigma(\Phi)$ such that $d(P,Q)=h$. In the second step we prove that 
the diameter of the surface $\Sigma(\Phi)$ is equal to $h$. If this is so, then by Pál's Theorem, there is a body of constant width containing the surface $\Sigma(\Phi)$ which of course has $\Sigma(\Phi)$ as its boundary.

For the first part of the proof, note that a point $P$ in the surface $\Sigma(\Phi)$ belongs either to a face $\tau(x)$, for some vertex $x$ of $G_\Phi$ or belongs to a wedge $W(x,y)$ for some edge $\widetilde{xy}$ of $G_\Phi$. In the first case case, $d(P,x)=h$ and in the second case, $d(P,Q)=h$ for some point $Q$ in the dual edge $\widetilde{ab}$ of $\widetilde{xy}$.

For the second part of the proof, suppose $PQ$ is a diameter of $\Sigma(\Phi)$. Then the planes orthogonal to $PQ$ at either $P$ or $Q$ are support planes of $\Sigma(\Phi)$. Furthermore, since the surface $\Sigma(\Phi)$ is contained in $\Phi$, by the strict convexity of $\Phi$, all the points strictly between $P$ and $Q$ are interior points of $\Phi$.

At this point, we need a classification of the points of $\Sigma(\Phi)$. First, we have the vertex points, second we have the points of $\Sigma(\Phi)$ which are in the relative interior of some edge $\widetilde{ab}\in E(G_\Phi)$. All other points of $\Sigma(\Phi)$ are regular points. From this last class, we have those which are in the interior of some face $\tau(x)$ of $\Phi$ and therefore belong to $S(x,h)$ for some vertex $x$ of $\Phi$, those which are in the interior of a wedge $W(x,y)$ and, finally, those regular points of $\Sigma(\Phi)$ which are in the intersection of a wedge $W(x,y)$ and a face $\tau(a)$. 

Suppose first that $P$ is a regular point of $\Sigma(\Phi)$. If $P$ belongs to a face of $\Phi$, then the chord $PQ$ is a normal chord of $\Phi$ at $P$ and therefore there is a vertex $X$ of $\Phi$ between $P$ and $Q$. Since $X\in\bd\Phi$ and since every point strictly between $P$ and $Q$ is an interior point of $\Phi$, then $X=Q$ and therefore the length of $PQ$ is $h$. Similarly, if $P$ is a point of the wedge $W(x,y)$, then the chord $PQ$ is normal to the surface $W(x,y)$ at $P$ and therefore there is a point $X$ in the edge $\widetilde{ab}\in E(G_\Phi)$ dual to $\widetilde{xy}\in E(G_\Phi)$ which is between $P$ and $Q$. As before, since $X\in\bd\Phi$, then $X=Q$ and therefore the length of $PQ$ is $h$.

Suppose now neither $P$ nor $Q$ are regular points of $\Sigma(\Phi)$. If both $P$ and $Q$ are vertices of $\Phi$ then the length of $PQ$ is smaller than or equal to $h$, so we may assume that $P$ is in the interior of the edge $\widetilde{ab}$. If this is so, since $PQ$ is normal to the surface $\Sigma(\Phi)$ at $P$, then it is also normal to $\Phi$ at $P$. Since $Q$ is either a vertex of $\Phi$ or lies in some edge of $\Phi$, we have that $PQ$ is a chord of $\Phi$. In fact, it is a normal chord $\Phi$, hence either $Q$ is a vertex and then the length of $PQ$ is $h$ or $Q$ belongs to $\Sigma(xy)$, which is impossible by construction. This completes the proof.
\end{proof}

We define a Meissner polyhedron as any constant width body that can be obtained from a Reuleaux polyhedron $\Phi$ in $R^3$ by performing surgery on one edge of each pair of dual edges of the self-dual graph $G_\Phi$.

\begin{figure}[ht]
\includegraphics[width=.3\linewidth]{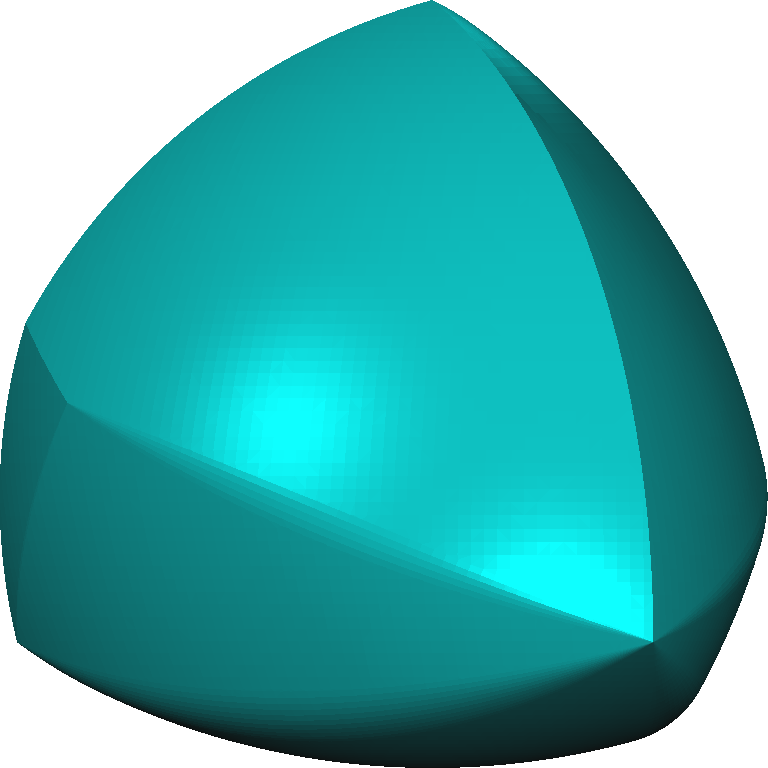}
\includegraphics[width=.3\linewidth]{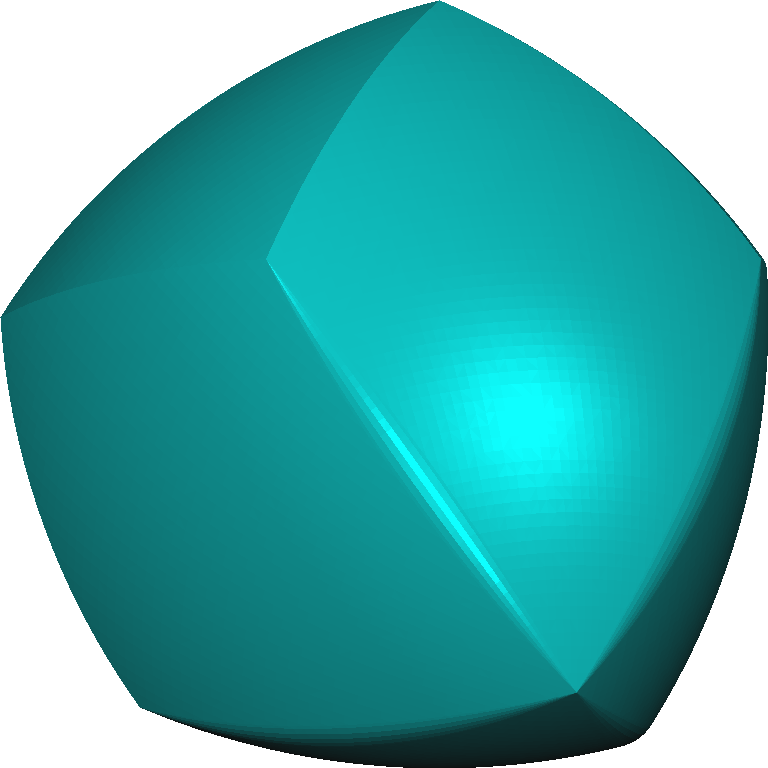}
\includegraphics[width=.3\linewidth]{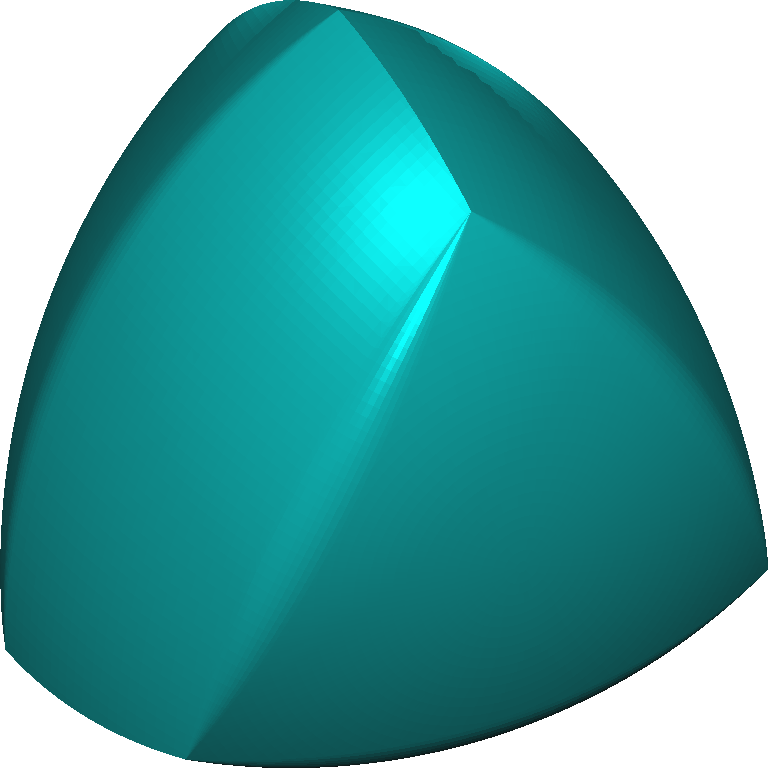}
\caption{Meissner polyhedra.}
\label{fig:meissner}
\end{figure}

In \cite{S}, Sallee showed that every $3$-dimensional smooth convex body of constant width $h$ can be closely approximated by Reuleaux polyhedron (with arbitrarily small edges). Therefore every $3$-dimensional constant width body can be closely approximated by a Meissner polyhedron.

A body $\Psi\subset R^3$ of constant width is called a \emph{Meissner solid} if it has the property that the smooth components of its boundary have their smaller principal curvature constant. Here smooth means twice continuously differentiable. Clearly, every Meissner polyhedron is a Meissner solid.

The Blaschke-Lebesgue problem consists of minimizing the volume in the class of convex bodies of fixed constant width. Anciaux and Guilfoyle \cite{AG} proved that a minimizer of the Blaschke-Lebesgue problem is always a Meissner solid.
On the other hand, Shiohama and Takagi \cite{ST} proved that a non-spherical surface with one constant principal curvature must be a canal surface, that is, the envelope of a one-parameter family of spheres or equivalently, a tube over a curve (i.e. the set of points which lie at a fixed distance from this curve). They are made of spherical caps of radius $h$ and surfaces of revolution over a circle of radius $h$, exactly like the Meissner polyhedra.

\section{Constructing constant width bodies from Reuleax polygons}\label{sec:construct}

Let $P$ be a Reuleaux polygon of width $1$ with vertices $p_1,p_2,\dots,p_n$ as in Figure \ref{fig:pentagon}. We will assume that $P\subset R^2\times\{0\}\subset R^3$. Of course, $n$ is an odd integer and $P=\bigcap_{i=1}^n B(p_i,1)\cap\{z=0\}$.

\begin{figure}[ht]
\includegraphics[width=0.3\textwidth]{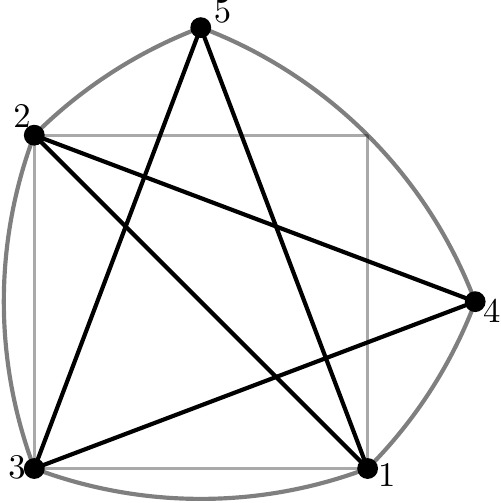}
\caption{A Reuleaux polygon $P$ with five vertices.}
\label{fig:pentagon}
\end{figure}

Let us consider the \emph{farthest-point Voronoi diagram of $P$} (see e.g. \cite{Bro}). In other words: for each vertex $p_i$ of $P$, consider the set of all points $x\in P$ with the property that
$$d(x,p_i)\geq \max\{d(x,p_j)\mid 1\leq j \leq n\}.$$
This gives a cell decomposition of $P$ into $n$ convex cells, each one of them containing the corresponding circle arc $S(p_i,1)\cap P$.
The boundary between any two of these convex cells is a straight line edge, and the collection of these edges gives rise to the embedding of a tree with straight line edges and vertices denoted by $V(P)$. See Figure \ref{fig:vorodela}(a).

\begin{figure}[ht]
\includegraphics[width=0.3\textwidth]{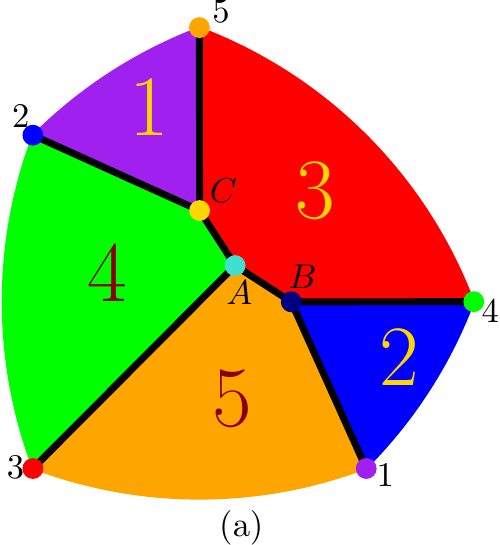}\qquad
\includegraphics[width=0.3\textwidth]{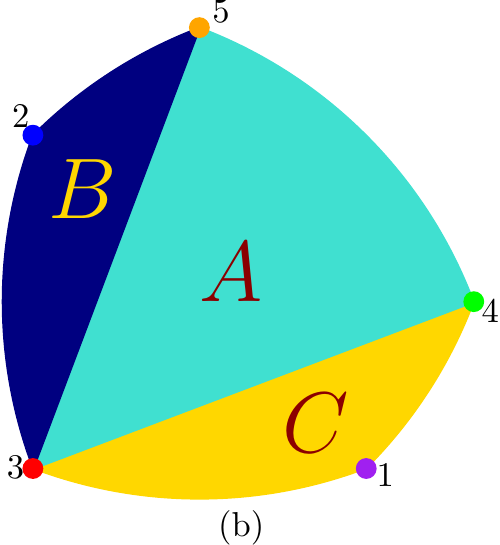}
\caption{Voronoi diagram and Delaunay triangulation.}
\label{fig:vorodela}
\end{figure}

The \emph{farthest-point Delaunay triangulation of $P$} is the planar dual of the farthest-point Voronoi diagram of $P$ (see e.g. \cite{Epp}). It gives rise to a family $F$ of subsets of $\{p_1,\dots,p_n\}$ whose convex hulls divide $\conv\{p_1,\dots,p_n\}$. See Figure \ref{fig:vorodela}(b). It turns out that $T\in F$ if and only if $\abs{T}\ge 3$ and there is a disk $D_T$ containing $P$ whose boundary intersects $P$ exactly at $T$.
Furthermore, if $c_T$ is the center of $D_T$ and $r_T$ is the radius of $D_T$, then the collection $\{c_T \mid T\in F\}$ is precisely $V(P)\setminus\{p_1,\dots,p_n\}$.

Now consider the $3$-dimensional ball polyhedron
$$\bar P = \bigcap_{i=1}^n B(p_i,1).$$
As in Section \ref{sec:ballpoly}, let $V(\bar P)$ be the set of $0$-singular points of the boundary of $\bar P$.
It is not difficult to verify that $V(\bar P)=\{(c_T, \pm\sqrt{1-r_T^2}) \mid T\in F\}$ and that the orthogonal projection of the cell decomposition of the boundary of $\bar P$ coincides precisely with the cell decomposition of the Voronoi diagram for the farthest point discussed above.

Define 
$$P^+=\bar P\cap \{z\geq 0\}.$$
and for every $1\leq i\leq n$, let $\sigma_i=S(p_i,1) \cap P^+$ be the spherical face of the boundary of $P^+$.

\begin{lemma}
The convex body $P^+$ has the following properties:
\begin{itemize}
\item the diameter of $P^+$ is $1$,
\item for every point $x \in\bd P^+ \cap \{z>0\},$ there is a point $y$ in the boundary of the Reuleaux polygon $P$ such that $d(x,y)=1$.
\end{itemize}
\end{lemma}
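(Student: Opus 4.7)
My plan is to handle the two bullets in turn, letting the second serve as a warm-up for the first. For the second bullet I note that $\bd P^+\cap\{z>0\}=\bd\bar P\cap\{z>0\}$, since the half-space constraint $z\ge 0$ is inactive above the plane. As $\bd\bar P$ is the union of the spherical pieces $S(p_i,1)\cap\bar P$, any such $x$ lies on some $S(p_i,1)$, and then $p_i$ is a vertex of $P$, so $p_i\in\bd P$ and $d(x,p_i)=1$.

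For the first bullet, the lower bound $\operatorname{diam}(P^+)\ge 1$ is immediate since $P\subset P^+$ and $P$ itself has diameter $1$ as a Reuleaux polygon. For the upper bound I pick $x,y\in P^+$ realizing the diameter, set $D=d(x,y)$, and use that $\overline{xy}$ is a normal chord of $P^+$ at both endpoints. If $x_3=y_3=0$ then $x,y\in P$ and $D\le 1$ is already known, so I may assume $y_3>0$. Since the half-space constraint is inactive at $y$, the outward normal cone at $y$ for $P^+$ coincides with that for $\bar P$, which is generated by the vectors $y-p_i$ over those indices $i$ with $y\in S(p_i,1)$.

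Writing the unit outward normal $(y-x)/D$ as a non-negative combination $\sum_i\lambda_i(y-p_i)=\Lambda(y-q)$, where $\Lambda=\sum_i\lambda_i$ and $q=\sum_i\lambda_ip_i/\Lambda$ is a convex combination of the corresponding $p_i$'s, I obtain $x=(1-D\Lambda)y+D\Lambda q$. The crucial point is that every $p_i$, and hence $q$, lies in $\{z=0\}$, so $x_3=(1-D\Lambda)y_3$. Combined with $x_3\ge 0$ and $y_3>0$, this forces $D\Lambda\le 1$. On the other hand, taking norms in the same identity gives $\Lambda\, d(y,q)=1$, and the triangle inequality for the convex combination yields $d(y,q)=\bigl|\sum_i\lambda_i(y-p_i)\bigr|/\Lambda\le 1$ because each $d(y,p_i)=1$. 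Thus $\Lambda\ge 1$ and $D\le 1/\Lambda\le 1$.

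The main obstacle I anticipate is keeping the normal-cone argument uniform across the three geometric possibilities for $y$ — a regular point on a single sphere, a point on an edge, or a $0$-singular vertex of $\bar P$ — and the convex-combination formulation handles all three simultaneously. The essential role of the truncation by $\{z\ge 0\}$ shows up in the step $x_3=(1-D\Lambda)y_3$, which relies on $q$ lying in the plane; without this truncation one would also pick up a term $D\Lambda q_3$ and the bound would fail, consistent with the fact that $\bar P$ itself typically has diameter strictly larger than $1$.
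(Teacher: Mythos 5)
Your proof is correct, but it takes a genuinely different route from the paper's. The paper argues qualitatively: it takes a diameter $xy$ of $P^+$, asserts (with little justification) that one may assume $x\in\bd P$ and $y\in\bd P^+\cap\{z>0\}$, and then splits into cases according to whether $y$ is a regular or a singular point of $\bd P^+$, concluding $d(x,y)=1$ in each case from the binormal property of diameters. You replace this case analysis with a single computation in the normal cone of the ball polyhedron: writing the outward normal $(y-x)/D$ at $y$ as a nonnegative combination $\Lambda(y-q)$ of the vectors $y-p_i$ over the active spheres, you extract the two inequalities $\Lambda\ge 1$ (from $d(y,p_i)=1$ and the triangle inequality applied to the convex combination $q$) and $D\Lambda\le 1$ (from $x_3\ge 0$, $y_3>0$ and $q_3=0$). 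This treats regular points, edge points and $0$-singular vertices of $\bar P$ uniformly, and it makes explicit exactly where the truncation by $\{z\ge 0\}$ is used --- something the paper's two-line sketch leaves implicit. The one ingredient you invoke beyond the paper's toolkit is the normal-cone formula $N_{\cap_i B(p_i,1)}(y)=\sum_i N_{B(p_i,1)}(y)$ for the intersection of the balls; this is standard and valid here because the balls have a common interior point (e.g.\ the circumcenter of $P$, whose distance to each $p_i$ is strictly less than $1$), but it deserves an explicit mention. Both proofs share the starting observation that a diameter is a normal chord at both endpoints, and your disposal of the second bullet is precisely the reason the paper calls it obvious.
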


\begin{proof}
The second statement is obvious. For the proof of the first statement, let $xy$ be a diameter of $P^+$. Then we may assume without loss of generality that $x$ belongs to the boundary of the polygon $P$ and $y \in\bd P^+ \cap \{z>0\}$. If $y$ is a regular point of the boundary of $P^+$, then $d(x,y)=1$. If $y$ is a singular point of $P^+$, since $x$ is in the boundary of $P$, then $x$ is a vertex of $P$ and hence $d(x,y)=1$.
\end{proof}

So, by Pál's theorem, there is a unique body $\Psi$ of constant width $1$ containing $P^+$. Furthermore, by the above, $\bd P^+ \cap \{z>0\}$ is contained in the boundary of 
$\Psi$, so $\Psi \cap \{z\geq 0\}=P^+$. Now we want to describe the bottom of $\Psi$, namely $\Psi\cap \{z\leq 0\}$.
It turns out that the bottom of $\Psi$ is determined by the binormal chords at singular points of the boundary of $P^+$.
That is, for every point $x\in$ $\bd\Psi \cap \{z\leq 0\}$ there is a singular point $y\in\bd P^+ \cap \{z>0\}$, such that $d(x,y)=1$.

Recall that $F$ is the farthest-point Delaunay triangulation of $P$ and let $T\in F$. The set of normal lines of $P^+$ at the vertex $v_T=(c_T,\sqrt{1-r_T^2})$ is equal to the cone $\bigtriangleup_T$ with apex at the point $v_T$ though $\conv T$, therefore $\sigma_T=\bigtriangleup_T \cap S(v_T,1)\subset\bd\Psi$ is a spherical cap face of $\Psi$.

Assume that $v,v'\in V(P)$ are adjacent in the tree defined by the farthest-point Voronoi diagram of $P$. Let $x$ be a point on the edge $\widetilde{vv'}$, we wish to determine the set of normal lines of $P^+$ at $x$. There are two possibilities.

If $v=c_T$ and $v=c_{T'}$ for some $T,T'\in F$, then $T$ and $T'$ share a side of the form $p_ip_j$. Thus, the set of normal lines of $P^+$ at $x$ is equal to the cone with apex $x$ through the segment $\overline{p_ip_j}$.
This implies that, in the boundary of $\Psi$, the face $\sigma_T$ is connected with the face $\sigma_{T'}$ through a surface of revolution with axis the line through the segment $\overline{p_ip_j}$.

\begin{figure}[ht]
\includegraphics[width=0.3\textwidth]{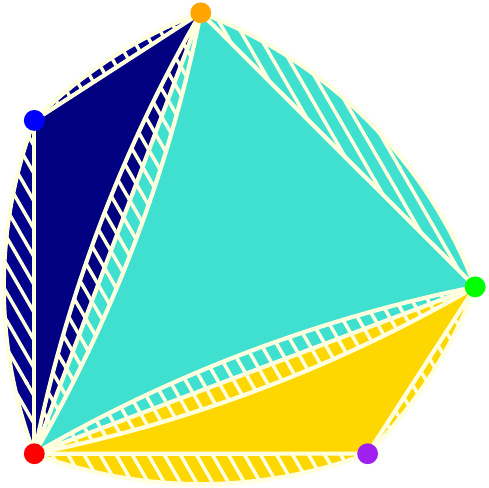} 
\caption{$\Psi$ from the bottom.}
\label{fig:bottombull}
\end{figure}

If $v=c_T$ for some $T\in F$ and $v'=p_i$ for some $i$, we may reason similarly to conclude that, in the boundary of $\Psi$, the face $\sigma_T$ is connected with the face $\sigma_i$ through a surface of revolution with axis the line through the segment of the form $\overline{p_jp_k}$ opposite to $p_i$. These surfaces of revolution are illustrated in Figure \ref{fig:bottombull}.

So the boundary of $\Psi$ consists of the spherical caps $\sigma_1,\dots,\sigma_n$ and $\sigma_T$ with $T\in F$ and surfaces of revolution on the bottom part of $\Psi$. A picture of such a body is shown in \ref{fig:bull}.

\begin{figure}[ht]
\includegraphics[width=0.3\textwidth]{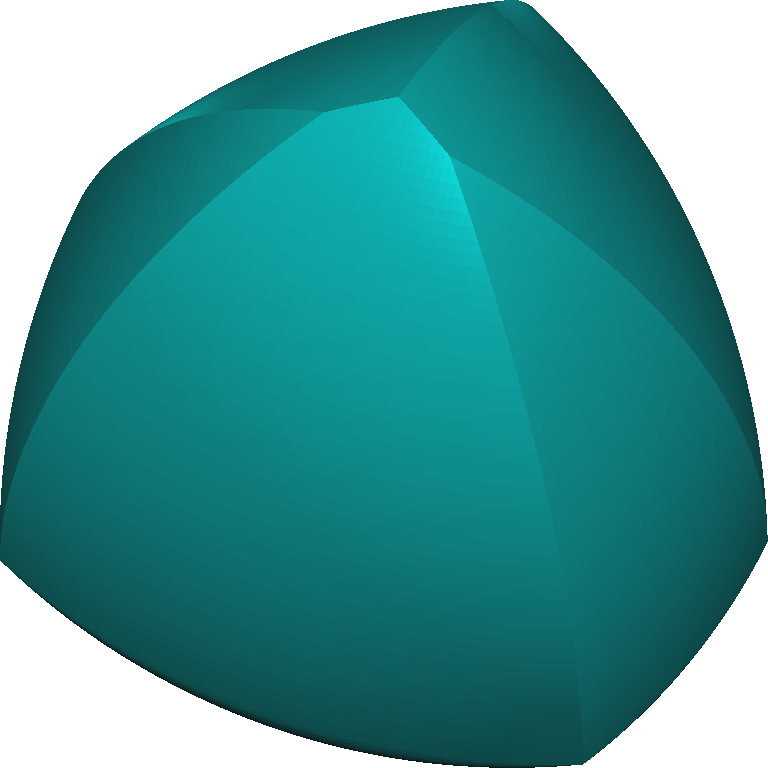}\qquad
\includegraphics[width=0.3\textwidth]{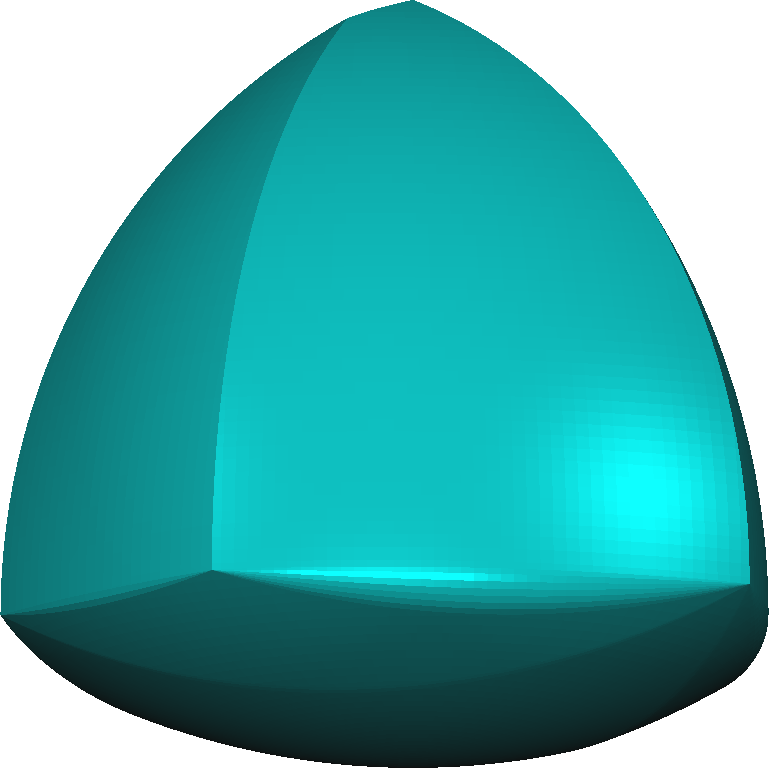} 
\caption{A constant width body obtained from a Reuleaux pentagon.}
\label{fig:bull}
\end{figure}

It is easy to see, from the above discussion, that the set of vertices of $\Psi$ is $V=\{p_i\mid 1\le i\le n\}\cup\{v_s\mid T\in F\}$. So, $$\Phi=\bigcap_{v\in V}B(v,1)$$ is a Reuleaux polyhedron and therefore induces a self-dual graph $G_\Phi$ (see Figure \ref{fig:bullgraph}).
Actually, $V\subset R^3$ is a metric embedding of $G_\Phi$ and of course, $\Psi$ is a Meissner polyhedron because it can be obtained from the Reuleaux polyhedron $\Phi$ by performing surgery on one edge of each pair of dual edges of $\Phi$, as shown in Section \ref{sec:meipoly}.

\begin{figure}[ht]
\includegraphics[width=0.3\textwidth]{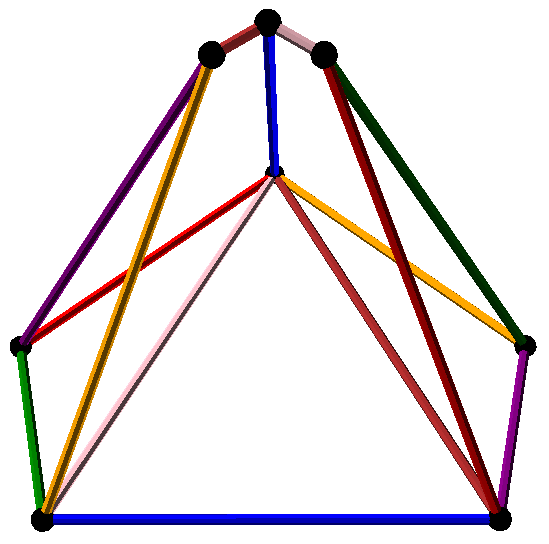} 
\caption{Metric embedding of the self-dual graph $G_\Phi$.}
\label{fig:bullgraph}
\end{figure}

As a second example we show the Voronoi diagram and Delaunay triangulation of a Reuleaux polygon with $7$ vertices in Figure \ref{fig:deer}. We can also interpret these two figures as the top and bottom view of the corresponding Meissner solid.

\begin{figure}[ht]
\includegraphics[width=0.3\textwidth]{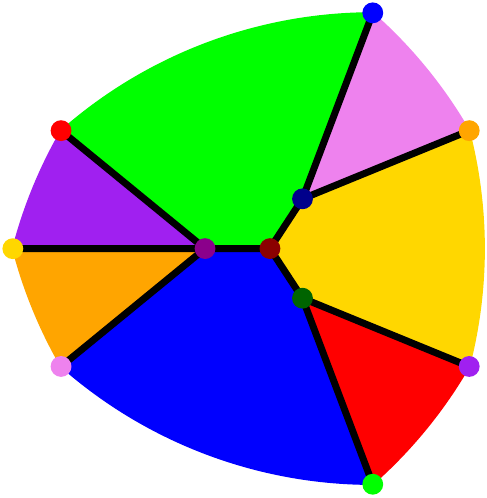}
\qquad
\includegraphics[width=0.3\textwidth,height=0.3\textwidth]{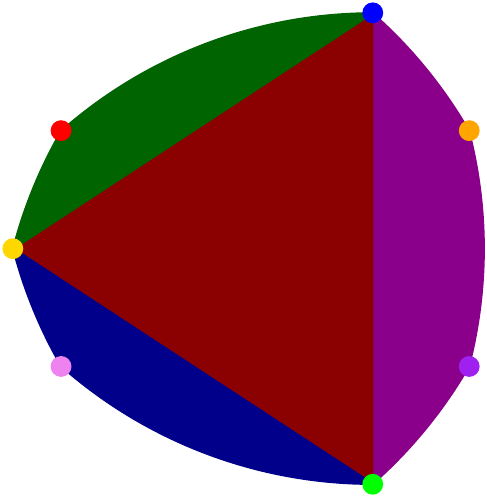}
\\[12pt]
\includegraphics[width=0.3\textwidth]{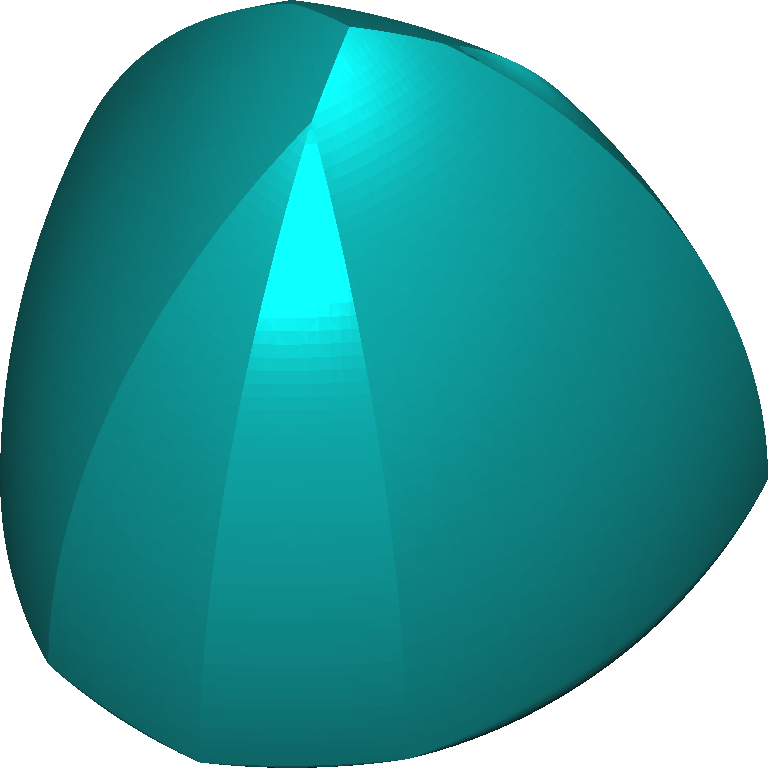}\qquad
\includegraphics[width=0.3\textwidth]{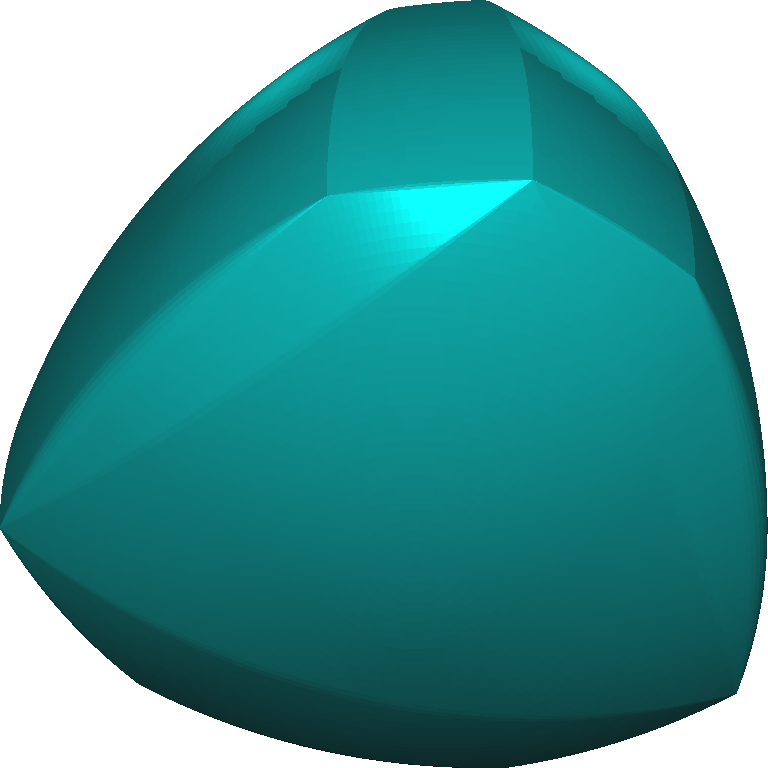}
\caption{Voronoi diagram and Delaunay triangulation of a Reuleaux heptagon and the body of constant width obtained from it.}
\label{fig:deer}
\end{figure}

Summarizing, we have proved the following.

\begin{theorem}\label{sum}
Given a Reuleaux polygon $P\subset R^2$, there is a finite procedure (using the Voronoi Diagram and the Delaunay triangulation) to construct a Meissner polyhedra $\tilde P\subset R^3$ such that
$\tilde P\cap R^2=P$.
\end{theorem}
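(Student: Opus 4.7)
The plan is to package the construction of Section \ref{sec:construct} as an explicit finite algorithm and invoke the verifications already established there. The algorithm takes a Reuleaux polygon $P\subset R^2\times\{0\}$ with vertices $p_1,\dots,p_n$ and executes four finite steps: (i) compute the farthest-point Voronoi diagram of $P$ and its dual Delaunay family $F$, obtaining for each $T\in F$ the centre $c_T$ and radius $r_T$ of the Delaunay circle; (ii) form the finite point set
$$V = \{p_i : 1\le i\le n\}\cup\{(c_T,\pm\sqrt{1-r_T^2}) : T\in F\}\subset R^3;$$
(iii) form the ball polyhedron $\Phi = \bigcap_{v\in V} B(v,1)$; (iv) apply the surgery of Section \ref{sec:meipoly} to one edge of each pair of dual edges of $G_\Phi$, choosing the representatives symmetrically under the reflection $z\mapsto -z$. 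Step (i) is a standard polynomial-time computational-geometry task, and (ii)--(iv) involve only the finite set $V$ and the finite graph $G_\Phi$.

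For correctness, the discussion following the lemma in Section \ref{sec:construct} verifies that $V$ is the set of $0$-singular points of $\bd\Phi$ and satisfies the metric embedding condition \eqref{eq:embedding}. Theorem \ref{teo} then yields that $\Phi$ is a Reuleaux polyhedron, and the surgery theorem of Section \ref{sec:meipoly} guarantees that step (iv) produces a constant-width body $\tilde P$, which is by definition a Meissner polyhedron.

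For the slicing identity $\tilde P\cap R^2 = P$, first compute $\Phi\cap\{z=0\}$ directly: each ball $B(p_i,1)$ slices to the unit disk around $p_i$, and each ball $B(v_T^\pm,1)$ slices to the Delaunay disk $D_T$, which contains $P$ and imposes no new constraint; hence $\Phi\cap\{z=0\}=P$. Next, identify $\tilde P$ with the body $\Psi$ of Section \ref{sec:construct}: the wedges produced by the symmetric surgery on the lower dual-edge representatives match the surfaces of revolution on the bottom of $\Psi$ described there, and the upper half is untouched, so $\tilde P\cap\{z\ge 0\}=P^+$. Therefore $\tilde P\cap\{z=0\}=P^+\cap\{z=0\}=P$, as required.

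The main obstacle is the compatibility of the dual-edge pairing of $G_\Phi$ with the reflection $z\mapsto -z$: one must verify that a symmetric choice of dual-edge representatives exists and that the resulting surgery reproduces exactly the surfaces of revolution described for the bottom of $\Psi$ in Section \ref{sec:construct}. Once this combinatorial matching is made, all remaining facts---finiteness, Meissner property, and the slicing identity---follow directly from Theorem \ref{teo} and the surgery theorem of Section \ref{sec:meipoly}.
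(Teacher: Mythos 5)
The central step of your algorithm is broken at (ii): you take as vertex set $V=\{p_i\}\cup\{(c_T,\pm\sqrt{1-r_T^2})\colon T\in F\}$, i.e.\ you include \emph{both} lifts of each Delaunay circumcenter. That set is the vertex set $V(\bar P)$ of the ball polyhedron $\bar P=\bigcap_i B(p_i,1)$, but it is \emph{not} a metric embedding of any self-dual graph: the two lifts of the same circumcenter satisfy $d\bigl(v_T^+,v_T^-\bigr)=2\sqrt{1-r_T^2}$, which exceeds $1$ whenever $r_T<\sqrt{3}/2$ (already for the Reuleaux triangle, $r_T=1/\sqrt{3}$ and this distance is $2\sqrt{2/3}\approx 1.63$). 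Hence condition \eqref{eq:embedding} fails, $V\not\subset\bigcap_{v\in V}B(v,1)$, Theorem \ref{teo} does not apply, and your $\Phi$ is not a Reuleaux polyhedron with vertex set $V$. The Reuleaux polyhedron the paper ends up with uses only the planar vertices $p_i$ together with the \emph{upper} lifts $v_T=(c_T,+\sqrt{1-r_T^2})$; the presence of both lifts in $\bar P$ is precisely the obstruction that forces the paper to truncate to $P^+=\bar P\cap\{z\ge 0\}$ in the first place.

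Second, the ``symmetric under $z\mapsto -z$'' choice of surgered edges cannot produce the desired body even after the vertex set is corrected, because the target $\Psi$ is not reflection-symmetric: its top consists of the spherical faces $\sigma_i\subset S(p_i,1)$ with $p_i$ in the plane $z=0$, while its bottom consists of caps $\sigma_T\subset S(v_T,1)$ centered at the \emph{upper} singular points plus surfaces of revolution about planar segments $\overline{p_ip_j}$. The correct selection of dual-edge representatives is dictated by the requirement that the faces $\sigma_i$ remain untouched, not by a reflection symmetry. Note also that the paper's own argument runs in the opposite direction from yours: it first shows (via the Lemma) that $P^+$ has diameter $1$, invokes P\'al's theorem to obtain the unique constant width completion $\Psi\supset P^+$, describes the bottom of $\Psi$ through the normal cones at the singular points $v_T$, and only afterwards recognizes $\Psi$ as the Meissner polyhedron obtained by surgery from $\Phi=\bigcap_{v\in V}B(v,1)$ with $V=\{p_i\}\cup\{v_T\}$. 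Your ``build $\Phi$ first, then surger'' order could in principle be made to work, but only with the correct one-sided vertex set and with the surgered edges identified explicitly --- the very point you flag as the main obstacle and leave unresolved.
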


\section{Acknowledgements}

This research was supported by CONACYT project 166306 and PAPITT-UNAM project IN112614.

\end{document}